\newcommand{\df}{\stackrel{\rm def}{=}}
\newcommand{\of}[1]{\left( #1 \right)}
\newcommand{\absvalue}[1]{\left| #1 \right|}
\newcommand{\set}[2]{\left\{\hspace{0.2ex} #1 \left|\: #2
\right. \right\}}
\newcommand{\vecset}[2]{\left(\hspace{0.2ex} #1 \left|\: #2 \right. \right)}
\newcommand{\injection}[2]{\colon #1 \rightarrowtail #2}
\newtheorem{theorem}{Theorem}
\newtheorem{definition}[theorem]{Definition}
\newtheorem{lemma}[theorem]{Lemma}
\newtheorem{claim}[theorem]{Claim}
\newtheorem{problem}{Problem}
\newcommand{\beginproof}{\medskip{\bf Proof.~}}
\newcommand{\finishproof}{\hspace{0.2ex}\rule{1ex}{1ex}}
\newenvironment{proof}{\beginproof}
{\unskip\nolinebreak\finishproof\par\medskip}
\title{On the Range of the Permanent of $(\pm1)$-Matrices}
\author{DeVon Ingram\thanks{University of Chicago, Department of Mathematics, {\tt dingram@math.uchicago.edu}} \and
Alexander Razborov\thanks{University of Chicago, {\tt razborov@uchicago.edu} and Steklov Mathematical Institute, {\tt razborov@mi-ras.ru}}}
\date{\today}
\begin{document}
\maketitle
\begin{abstract}
We establish a superpolynomial lower bound on the range of the permanent function on the set of $n\times n$ matrices with $\pm1$ entries.
\end{abstract}
\section{Introduction}
The \textbf{determinant} of an $n\times n$ matrix $M$ is given by the following formula:
\[
\det(M)\coloneqq\sum_{\sigma\in S_n}\text{sign}(\sigma)\prod_{i=1}^na_{i\sigma(i)}.
\]
The \textbf{permanent} is defined similarly, but without the factor of
sign$(\sigma)$:
\[
\mathrm{per}(M)\coloneqq\sum_{\sigma\in S_n}\prod_{i=1}^na_{i\sigma(i)}
\]
This seemingly innocuous modification leads to stark differences between
determinants and permanents, see e.g. \cite{minc1984permanents}.

A considerable amount of attention in the theory of
permanents has been paid to 0-1 matrices and ($\pm 1$)-matrices due to their
importance in combinatorics, numerical analysis and many other areas. The theory of permanents
of 0-1 matrices is rich and well developed, see e.g. \cite{GUTERMAN2018256} and the references cited therein. The
investigation of the permanents of ($\pm 1$)-matrices, which is the subject
of the current note, also has a long history starting with \cite{marcus1962inequalities,reich1971another,perfect1973positive} and continued in \cite{wang1974permanents,krauter1983permanenten,seifter1984upper,krauter1993permanents,wanless2005permanents,budrevich2015divisibility,GUTERMAN2018256,budrevich2018krauterconjecturepermanentstrue}.

Much of this work, both for 0-1 and ($\pm 1$)-matrices has concentrated on
studying the \textbf{range} of the permanent function, i.e the set of values
it can attain for a given $n$. As an example, the main result of \cite{wang1974permanents} asserts certain divisibility conditions by powers of 2 that
$\mathrm{per}(M)$ must satisfy for a ($\pm 1$)-matrix $M$ and \cite{budrevich2018divisibility} complemented this with non-divisibility conditions.
Krauter's conjecture concerning the minimum {\em positive} value of
$\mathrm{per}(M)$ is still open, although it was confirmed for smaller values
of $n$ in \cite{wanless2005permanents}.

One of the most basic questions one can ask in this direction is about the {\em cardinality} of the range or, in other words, ``how many values are attained by the permanent on the set of $(\pm 1)$-matrices?" Surprisingly, the only known lower bound on the cardinality of the aforementioned set seems
to be $n+1$ \cite{krauter1993permanents}. This is in sharp contrast with the situation for 0-1 matrices where the exponential lower bound $n!$ had been known long ago \cite{brualdirysercmt}. Perhaps, one explanation of this remarkable difference is that while the permanent of a 0-1 matrix (or, for that matter, any matrix with non-negative entries) is monotone in its entries, for $(\pm 1)$-matrices it is not true and local changes can bring about unpredictable results.

A better understanding of the range of the permanent function may yield insight on its study in various applications, such as combinatorial random matrix theory \cite{tao2008permanentrandombernoullimatrices,vu2020recentprogresscombinatorialrandom,kwan2022permanent} and quantum computing \cite{aaronson2010computationalcomplexitylinearoptics}. Also, a description of the overall structure of the range may lead to progress towards answering questions concerning the existence of large generalized arithmetic progressions contained within the range, which in turn would yield results on the (anti)concentration of the permanent \cite{nguyen2011optimalinverselittlewoodoffordtheorems}.

\medskip
The main result of this note (Theorem \ref{thm:main2})
drastically improves Krauter's lower bound to {\em super-polynomial in $n$}.
The proof uses some concepts and simple ideas from additive combinatorics and
Diophantine geometry.

\section{Notation and the main results}
\begin{itemize}
    \item $\mathbb{R}^{k\times n}$ - the set of all real $k\times n$
        matrices
    \item $\Omega_{k,n}\subseteq\mathbb{R}^{k\times n}$ - the set of all
        matrices with $\pm1$ entries
    \item $J_{k,n}\in\Omega_{k,n}$ - the matrix in which all entries are
        equal to 1
    \item $A\ast B\in\mathbb{R}^{(k+\ell)\times n}$ - the matrix obtained
        by row concatenation of $A\in\mathbb{R}^{k\times n}$ and
        $B\in\mathbb{R}^{\ell\times n}$
    \item For $\vec n =(n_1,\ldots,n_k)$ with $n_1+\ldots+n_k\leq n$, we
        let $\displaystyle B_{\vec{n}}\in\Omega_{k,n}$ be the matrix in
        which every column has at most one entry that is equal to -1 and
        the $i$th row has $n_i$ such entries (and hence there are $n_0\df
        n-(n_1+\ldots+n_k)$ columns with all ones)
    \item $[n]\df\{1,\dots,n\}$ - the first $n$ positive integers
    \item $S_{k,n}$ (for $k\leq n$) - the set of all injective functions
        $\sigma\injection{[k]}{[n]}$
    \item $(n)_k\df|S_{k,n}|=n(n-1)\cdots(n-k+1)$ - the $k$th falling
        factorial of $n$
    \item $\mathrm{per}(A)\df\displaystyle\sum_{\sigma\in
        S_{k,n}}A_{1\sigma(1)}\cdots A_{k\sigma(k)}$ - the permanent of
        $A\in\mathbb{R}^{k,n}$
    \item $r_{k,n}\df|\set{\mathrm{per}(A)}{A\in \Omega_{k,n}}|$ - the
        number of distinct values attained by the permanent of matrices in
        $\Omega_{k,n}$
    \item
        $e_\ell(x_1,\dots,x_k)\df\displaystyle\sum_{S\in\binom{[k]}{\ell}}\prod_{i\in
        S}x_i$ - the $\ell$th elementary symmetric polynomial in $k$
        variables
\end{itemize}
We will use lowercase letters to denote $1\times n$ matrices (e.g.
$j_n=(1,\dots,1)$ is the $n$-dimensional row vector with all entries equal to
1). We also write $\Omega_n\df\Omega_{n,n},S_n\df S_{n,n},$ and $r_n\df
r_{n,n}$.

\bigskip

Our main result is the following:

\begin{theorem}\label{thm:main}
For every $k\geq 1$ there exists a constant $\epsilon_k>0$ such that for all
$n\geq k+1$ we have
$$
r_{k+1,n} \geq \epsilon_k n^{k-2}.
$$
\end{theorem}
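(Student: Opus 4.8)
The plan is to prove, by induction on $k$, that the family of matrices $j_n\ast B_{\vec n}$ — with $\vec n=(n_1,\dots,n_k)$ ranging over all tuples of non-negative integers satisfying $n_1+\dots+n_k\le n$ — already realizes at least $\epsilon_k n^{k-2}$ distinct permanents. Since $j_n\ast B_{\vec n}\in\Omega_{k+1,n}$, this implies the theorem. The base cases $k=1,2$ are trivial (the value set is nonempty), and the exponent $k-2$ appears precisely because the induction runs for $k-2$ steps, each contributing a factor of order $n$; to make the recursion close one in fact proves the slightly stronger statement in which the simplex is allowed to be truncated at $\sum_i n_i\le m$ for any $m\le n$, obtaining $\gg_k m^{k-2}$ values.

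\emph{Step 1: a closed formula.} Expanding the permanent of a matrix with columns $v_1,\dots,v_n\in\{\pm1\}^{k+1}$ as the coefficient of $x_0x_1\cdots x_k$ in $\prod_{c=1}^n\bigl(1+\sum_{i=0}^{k} v_{c,i}x_i\bigr)$, and using that $B_{\vec n}$ has only $k+1$ column types (the all-ones column, and for each $i\in[k]$ the column with a single $-1$ in row $i$, occurring $n_i$ times), a short generating-function computation gives
$$
\mathrm{per}(j_n\ast B_{\vec n})\;=\;\sum_{j=0}^{k}(-2)^j\,(n-j)_{k+1-j}\,e_j(n_1,\dots,n_k).
$$
Two features matter. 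First, this polynomial is \emph{multilinear} in $\vec n$, and incrementing $n_k$ changes it by a quantity independent of $n_k$: $\mathrm{per}(j_n\ast B_{(n_1,\dots,n_{k-1},\,n_k+1)})-\mathrm{per}(j_n\ast B_{\vec n})=-2\,\mathrm{per}\bigl(j_{n-1}\ast B_{(n_1,\dots,n_{k-1})}\bigr)$ — the $(k-1)$-dimensional instance of the same family. Second, it equals $n\prod_{i=1}^{k}(n-2n_i)+R(\vec n)$ with $R$ of strictly lower order, $|R(\vec n)|=O_k(n^{k})$ on the simplex; this shape will be used to exclude accidental collisions.

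\emph{Step 2: the inductive step.} Write $D(\vec a):=\mathrm{per}\bigl(j_{n-1}\ast B_{\vec a}\bigr)$ for $\vec a\in\mathbb Z_{\ge0}^{k-1}$. By the first feature above, $\mathrm{per}(j_n\ast B_{(\vec a,m)})=\mathrm{per}(j_n\ast B_{(\vec a,0)})-2m\,D(\vec a)$, so for fixed $\vec a$ the values $\{\mathrm{per}(j_n\ast B_{(\vec a,m)}):0\le m\le n-\sum_i a_i\}$ form an arithmetic progression with common difference $-2D(\vec a)$. Applying the inductive hypothesis to $D$ with truncation parameter $\lfloor n/2\rfloor$, we get $\gg_k n^{k-3}$ tuples $\vec a$, each with $\sum_i a_i\le n/2$, realizing pairwise distinct \emph{nonzero} values $D(\vec a)$ (dropping the single value $0$ if necessary); the associated progressions then all have length $\ge n/2$ and lie inside the value set to be bounded below. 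If these progressions were pairwise (almost) disjoint we would obtain $\gg_k n^{k-3}\cdot n=\gg_k n^{k-2}$ values, completing the induction with $\epsilon_k=2^{-O(k)}\epsilon_{k-1}$.

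\emph{The main obstacle.} The progressions need not be disjoint, and controlling their pairwise overlaps is the heart of the matter — already naive inclusion–exclusion breaks down for $k\ge4$. Here I expect to need genuine additive/Diophantine input: one should select the witnessing tuples $\vec a$ so that the common differences $D(\vec a)$ are not merely distinct but "rigid'' (for instance, all divisible by a large fixed $q$ while the base points $\mathrm{per}(j_n\ast B_{(\vec a,0)})$ fall into distinct residues modulo $q$, so that two progressions meet in only $O_k(1)$ points), exploiting the divisibility of $(\pm1)$-permanents by powers of $2$ (cf. \cite{wang1974permanents}) together with a divisor-type bound — coming from the explicit formula and from counting lattice points on hypersurfaces of the form $\prod_i y_i=w$ — for the number of solutions of $\mathrm{per}(j_{n-1}\ast B_{\vec a})=\mathrm{per}(j_{n-1}\ast B_{\vec a'})$. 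An essentially equivalent non-inductive route is to build directly a subset $X$ of the simplex of size $\gg_k n^{k-2}$ on which the main term $n\prod_i(n-2n_i)$ takes values pairwise separated by more than $\max|R|=O_k(n^{k})$ — e.g. by forcing the factors $n-2n_i$ to be pairwise multiplicatively independent; the constraint $\sum_i n_i\le n$, which pins all but a bounded number of these factors close to $n$ where products of them are dense rather than spread out, is what forces the loss of a factor $n$ relative to the presumably optimal $n^{k-1}$, and is the source of the genuine difficulty.
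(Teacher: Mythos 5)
Your construction differs from the paper's in a crucial way, and the difference is exactly where your argument breaks. You keep the appended row fixed equal to $j_n$ and try to generate many values by varying the tuple $\vec n$, which reduces the problem to bounding the union of $\gg_k n^{k-3}$ one-dimensional arithmetic progressions of length $\sim n$ with pairwise distinct common differences $-2D(\vec a)$. Distinctness of the differences is far too weak to control the overlaps: two progressions of length $L$ with differences $d$ and $2d$ and the same base point share $\sim L/2$ terms, and one can arrange $N$ progressions of length $L$ with pairwise distinct differences whose union has size $O(L\log N)$ rather than $NL$. You correctly identify this as ``the heart of the matter,'' but the proposed remedies (divisibility rigidity, multiplicative independence of the factors $n-2n_i$, separating the main terms $n\prod_i(n-2n_i)$ by more than the $O_k(n^k)$ error) are left as programmes, not proofs; in particular the non-inductive route requires exhibiting $\gg_k n^{k-2}$ lattice points in the simplex whose products $\prod_i(n-2n_i)$ are pairwise separated by $\gg n^k$, which is not established and is not obviously easier than the original problem. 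As it stands the argument proves only that the value set contains one long progression, i.e.\ a bound of order $n$, not $n^{k-2}$.

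The paper avoids this entirely by inverting your roles of ``fixed'' and ``varying'': it fixes a single tuple $n_i=\mu_i(n-n_0)$ with distinct rationals $\mu_i$, and instead varies the appended row over all of $\{\pm1\}^n$. After an affine change of variables this exhibits the value set as a single $k$-dimensional generalized arithmetic progression $\bigl\{\sum_i p_ix_i : 0\le x_i\le n_i\bigr\}$ whose differences $p_i$ are the permanental minors, and the whole difficulty is concentrated in showing this GAP has a proper subprogression of size $\gg_k n^{k-2}$. That in turn is done by a linear-algebra statement (the $p_i$, viewed as polynomials in $n$, span a $\mathbb{Q}$-space of dimension $\ge k-2$; this is the paper's main lemma, proved via a rank computation involving symmetric functions of the $\mu_i$) together with the elementary observation that a nonzero integer polynomial with coefficients of size at most $n/2$ cannot vanish at $n$. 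Properness on a $(k-2)$-dimensional subcube of side $\delta_k n$ then comes for free, with no overlap analysis. If you want to salvage your inductive scheme, you would need an additive-combinatorial input of roughly the strength of that lemma anyway; the multi-row GAP structure is what makes it usable.
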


We remark that for $1\leq k\leq\ell\leq n$,
per$(A*J_{(\ell-k),n})=\mathrm{per}(A)\cdot(n-k)_{\ell-k}$. Therefore,
$r_{k,n}$ is increasing in $k\leq n$,\footnote{Remarkably, we have not been
able to prove that $r_{k,n}$ is increasing in $n$.} and in particular, $r_n\geq r_{k,n}$.

The precise asymptotics in Theorem \ref{thm:main} can be made explicit, 
and we will do so in Section \ref{sec:Asymptotics}. Along with the remark above 
this will imply 

\begin{theorem} \label{thm:main2}
$r_n\geq n^{\Omega\of{\frac{\log n}{\log\log n}}}$.
\end{theorem}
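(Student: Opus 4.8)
The plan is to obtain Theorem~\ref{thm:main2} from Theorem~\ref{thm:main} by letting the auxiliary parameter $k$ grow slowly with $n$. The only input beyond the bare statement of Theorem~\ref{thm:main} that we need is quantitative control of the constant $\epsilon_k$ as $k\to\infty$; supplying this is the purpose of Section~\ref{sec:Asymptotics}. Concretely, I would revisit the proof of Theorem~\ref{thm:main} and track the constants throughout: the values of the permanent produced there come from ranging a shape vector $\vec n=(n_1,\dots,n_k)$ over the simplex $n_1+\cdots+n_k\le n$ and counting the distinct outputs of a permanent polynomial whose coefficients are assembled from the elementary symmetric functions $e_\ell$, and every one of these ingredients is completely explicit. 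Carrying the bookkeeping through should yield a bound of the shape $\epsilon_k\ge k^{-O(k^2)}$, equivalently $\log(1/\epsilon_k)=O(k^2\log k)$ (the precise exponent can be read off from the construction; what matters for us is this order of decay).

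I expect this explicit estimate to be the main obstacle. None of the individual pieces --- counting lattice points in a simplex, bounding coefficients of symmetric polynomials, controlling the gaps between attained values --- is hard in isolation, but the final exponent in $n$ is sensitive to how fast $\epsilon_k$ decays: a bound as weak as $\epsilon_k\ge e^{-O(k^3)}$ would only give $r_n\ge n^{\Omega(\sqrt{\log n})}$, strictly weaker than what is claimed, whereas $\epsilon_k\ge k^{-O(k^2)}$ sits exactly at the threshold that is needed. So the work lies in keeping the accounting tight enough to land on the right side of this.

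Granting the bound on $\epsilon_k$, the remainder is short. By the remark following Theorem~\ref{thm:main}, $r_{k,n}$ is nondecreasing in $k\le n$ --- because $\mathrm{per}(A\ast J_{(\ell-k),n})=\mathrm{per}(A)\cdot(n-k)_{\ell-k}$ and the falling factorial $(n-k)_{\ell-k}$ is nonzero, so multiplication by it injects the range of $\mathrm{per}$ on $\Omega_{k,n}$ into its range on $\Omega_{\ell,n}$ --- and hence $r_n=r_{n,n}\ge r_{k+1,n}$ whenever $k\le n-1$. Combining this with Theorem~\ref{thm:main} and the bound on $\epsilon_k$,
\[
r_n\ \ge\ r_{k+1,n}\ \ge\ \epsilon_k\, n^{\,k-2}\ \ge\ n^{\,k-2}\,k^{-O(k^2)}\qquad(1\le k\le n-1).
\]
Taking logarithms, $\log r_n\ge(k-2)\log n-O(k^2\log k)$. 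Now set $k=\big\lfloor\delta\log n/\log\log n\big\rfloor$ for a sufficiently small absolute constant $\delta>0$: then $k^2\log k=O\!\big(\delta^2(\log n)^2/\log\log n\big)$ while $(k-2)\log n=(1-o(1))\,\delta(\log n)^2/\log\log n$, so for $\delta$ small enough the positive term dominates and $\log r_n=\Omega\!\big((\log n)^2/\log\log n\big)$. Since $k\le n-1$ for all large $n$ the displayed chain applies, and exponentiating gives $r_n\ge n^{\Omega(\log n/\log\log n)}$. The leftover points --- rounding $k$ to an integer and disposing of small $n$ by the trivial bound $r_n\ge1$ --- are routine.
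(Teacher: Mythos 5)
Your proposal is correct and follows essentially the same route as the paper: Section~\ref{sec:Asymptotics} does exactly the coefficient bookkeeping you defer, arriving at $M_k\leq(16k^5)^k$, hence $\delta_k\geq k^{-O(k)}$ and $\epsilon_k=\delta_k^{k-2}\geq k^{-O(k^2)}$, after which the paper likewise invokes the monotonicity remark and sets $k=\lfloor\epsilon\log n/\log\log n\rfloor$. Your identification of $k^{-O(k^2)}$ as the exact threshold needed, and the final balancing of $(k-2)\log n$ against $O(k^2\log k)$, match the paper's argument precisely.
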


\section{Preliminaries}
We briefly review some standard definitions.

\subsection{Generalized arithmetic progressions}

This material can be found e.g. in \cite{tao2006additive}.

\begin{definition}[Generalized Arithmetic Progression]
A \textbf{generalized arithmetic progression (GAP)} of dimension $d$ is defined to be a set of the form
\[
\{x_0+\ell_1x_1+\cdots+\ell_dx_d:0\leq\ell_1<L_1,\dots,0\leq\ell_d<L_d\}
\]
where $x_0,x_1,\dots,x_d,L_1,\dots,L_d\in\mathbb{Z}$.
\end{definition}

Equivalently, a GAP is the projection of a $d$-dimensional ``box" in
$\mathbb{Z}^d$ to $\mathbb{Z}$. Given a GAP as defined above, we call the
product $L_1\dots L_d$ the \textbf{size} of the GAP. A GAP is \textbf{proper}
if its cardinality equals its size, i.e. the corresponding projection is
injective.

\subsection{Permanents} We now review several properties of permanents. Recall
that the \textbf{Laplace expansion} (or rather its straightforward analogue
for permanents) of an $n\times n$ matrix $B$ along the $i$th row is given by
\[
\mathrm{per}(B)=\sum_{j=1}^n b_{ij}p_{ij}
\]
where $b_{ij}$ is the $(i,j)$ entry of $B$ and $p_{ij}$ is the permanent of
the submatrix obtained by removing the $i$th row and $j$th column of $B$. We
note that this formula readily extends to rectangular $k\times n$ matrices,
where $k\leq n$.

We will also need the following explicit computation of the permanent of the
matrices $B_{n_1,\ldots,n_k}$ that will be the main building block in our
construction.

\begin{lemma} \label{lem:per_B}
For $n_1+\cdots+n_k\leq n$,
\[
\mathrm{per}(B_{n_1,\dots,n_k})=\sum_{\ell=0}^k\alpha_\ell(n)e_\ell(n_1,\dots,n_k),
\]
where
\[
\alpha_\ell(n)\df(-2)^\ell(n-\ell)_{k-\ell}
\]
\end{lemma}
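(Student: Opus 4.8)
The plan is to compute $\mathrm{per}(B_{n_1,\dots,n_k})$ by grouping the injective maps $\sigma\in S_{k,n}$ according to which rows get mapped to a $(-1)$-column. Concretely, fix the structure of $B_{\vec n}$: the $n$ columns split into the $n_0$ all-ones columns plus, for each row $i\in[k]$, a private block $C_i$ of $n_i$ columns whose unique $-1$ sits in row $i$ (and which are $+1$ in every other row). For a term $\prod_{i=1}^k (B_{\vec n})_{i\sigma(i)}$ to contribute anything other than $+1$, the index $\sigma(i)$ must land in $\bigcup_j C_j$; and because the $-1$ in a column of $C_j$ is in row $j$, we get a factor of $-1$ precisely when $\sigma(i)\in C_i$. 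So I would sum over subsets $S\subseteq[k]$ of the rows that are ``charged'' (i.e. $\sigma(i)\in C_i$ for $i\in S$), obtaining a sign $(-1)^{|S|}$.

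Next I would count, for a fixed $S$ with $|S|=\ell$, the number of injective $\sigma\in S_{k,n}$ with $\sigma(i)\in C_i$ for every $i\in S$ and no constraint on the remaining $k-\ell$ rows. The rows in $S$ can be assigned injectively into their private blocks in $\prod_{i\in S} n_i$ ways (the blocks are disjoint, so injectivity across different rows in $S$ is automatic, and $|C_i|=n_i$). Having used up $\ell$ columns, the remaining $k-\ell$ rows map injectively into the remaining $n-\ell$ columns in $(n-\ell)_{k-\ell}$ ways. Hence the total contribution of a fixed $S$ is $(-1)^\ell (n-\ell)_{k-\ell}\prod_{i\in S} n_i$, and summing over all $S$ of size $\ell$ and then over $\ell$ gives
\[
\mathrm{per}(B_{n_1,\dots,n_k})=\sum_{\ell=0}^k (-1)^\ell (n-\ell)_{k-\ell}\sum_{|S|=\ell}\prod_{i\in S} n_i=\sum_{\ell=0}^k (-1)^\ell (n-\ell)_{k-\ell}\, e_\ell(n_1,\dots,n_k).
\]
Comparing with the claimed $\alpha_\ell(n)=(-2)^\ell (n-\ell)_{k-\ell}$ shows a discrepancy of $2^\ell$ versus $1$, so I would need to recheck the sign-counting: in fact a column of $C_i$ contributes $-1$ in row $i$ but $+1$ in the other rows, so when row $i'\neq i$ (with $i'\notin S$) also happens to map into some block $C_j$, it still contributes $+1$ and nothing changes — so the naive count above seems right and the factor should be $(-1)^\ell$, not $(-2)^\ell$. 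The resolution must be that I am overcounting: the set $S$ is not ``the rows mapped into their own block'' but should be allowed to range freely, and a configuration in which row $i$ lands in $C_i$ is also counted (with the wrong sign) inside larger or smaller index sets unless one uses an inclusion–exclusion/binomial-transform bookkeeping. So the hard part will be getting this combinatorial identity exactly right.

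The clean way to handle it, which I would adopt, is to use the Laplace (row) expansion recursively, or equivalently to treat each row as contributing a factor and expand a product. Think of $\mathrm{per}(B_{\vec n})$ via the multilinearity of the permanent in its rows: the $i$th row of $B_{\vec n}$ equals $j_n - 2\chi_{C_i}$, where $\chi_{C_i}$ is the 0-1 indicator row of the block $C_i$. Expanding the permanent, which is multilinear and symmetric-function-like in the rows, over the $2^k$ choices of ``$j_n$ or $-2\chi_{C_i}$'' for each row gives a sum over subsets $S\subseteq[k]$ of a term $(-2)^{|S|}\,\mathrm{per}$ of the matrix whose rows are $\chi_{C_i}$ for $i\in S$ and $j_n$ for $i\notin S$. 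That permanent factors as (number of injective placements of the $S$-rows into their blocks) $\times$ (falling factorial for the rest) $= \big(\prod_{i\in S} n_i\big)\cdot (n-|S|)_{k-|S|}$, because the blocks $C_i$ are pairwise disjoint. Summing over $S$ with $|S|=\ell$ collects $e_\ell(n_1,\dots,n_k)$ and produces exactly $\alpha_\ell(n)=(-2)^\ell (n-\ell)_{k-\ell}$. This multilinearity argument is the heart of the proof; the only genuinely delicate point is justifying that the permanent of a 0-1/constant mixed matrix whose $+1$-pattern is ``block-diagonal plus all-ones rows'' factors as that product of a monomial $\prod_{i\in S} n_i$ and a falling factorial, which follows by first Laplace-expanding along the rows indexed by $S$ (each such row has its $+1$'s confined to the disjoint block $C_i$, contributing the factor $n_i$ and deleting one column), and then observing that the leftover $(k-\ell)\times(n-\ell)$ all-ones matrix has permanent $(n-\ell)_{k-\ell}$.

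I would then close the argument by remarking that this establishes the lemma for all $\vec n$ with $n_1+\cdots+n_k\le n$ simultaneously, since nothing in the expansion used more than disjointness of the blocks and the count $n_0 = n-(n_1+\cdots+n_k)\ge 0$ of all-ones columns. The main obstacle, as flagged, is purely bookkeeping: making sure the factor is $(-2)^\ell$ and not $(-1)^\ell$, which the multilinearity-in-rows viewpoint (writing row $i$ as $j_n - 2\chi_{C_i}$) makes transparent, versus the ``sum over which rows are charged'' viewpoint, which hides a $2$ inside an implicit inclusion–exclusion.
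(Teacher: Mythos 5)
Your final argument is correct, and it reaches the lemma by a genuinely different (and arguably cleaner) route than the paper. The paper partitions the injections $\sigma\in S_{k,n}$ by the \emph{exact} set $I(\sigma)$ of rows hitting a $-1$, writes $\mathrm{per}(B)=\sum_I(-1)^{|I|}|\Gamma_I|$, and then uses M\"obius inversion over the subset lattice to re-express everything in terms of the ``at least'' counts $|\Gamma_{\geq J}|=\bigl(\prod_{i\in J}n_i\bigr)(n-|J|)_{k-|J|}$; the factor $2^{|J|}$ materializes as the number of subsets $I\subseteq J$ in the double sum. You instead decompose each row as $j_n-2\chi_{C_i}$ and expand by multilinearity of the permanent in its rows, which puts the $(-2)^{|S|}$ in plain sight from the start and lands directly on the same quantity $\sum_S(-2)^{|S|}\bigl(\prod_{i\in S}n_i\bigr)(n-|S|)_{k-|S|}$. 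The two proofs are computing the same sum; yours trades the inclusion--exclusion bookkeeping for a one-line algebraic identity on the rows, at the cost of having to verify (as you do, correctly) that the permanent of the mixed $\chi_{C_i}/j_n$ matrix factors as $\prod_{i\in S}n_i\cdot(n-|S|)_{k-|S|}$ using disjointness of the blocks. Your false start with the $(-1)^\ell$ count is also diagnosed correctly: the quantity $\prod_{i\in S}n_i\cdot(n-\ell)_{k-\ell}$ counts $\sigma$ with $I(\sigma)\supseteq S$ rather than $I(\sigma)=S$, which is precisely the over/undercount that the paper's M\"obius inversion is designed to repair.
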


\begin{proof}
Each summand in the permanent of a $k\times n$ matrix can be identified with
an injective function $\sigma:[k]\to[n]$ (given a row in $[k]$ as an input,
return a column in $[n]$ as an output). Set $I(\sigma)\df\set{i\in
[k]}{B_{i,\sigma(i)}=-1}$, where $B\df B_{n_1,\ldots,n_k}$. That is, let
$\Gamma_I\df \set{\sigma\in S_{k,n}}{I(\sigma)=I}$. Then
\begin{equation} \label{eq:per}
\mathrm{per}(B) = \sum_{I\subseteq [k]} (-1)^{|I|}\cdot |\Gamma_I|.
\end{equation}
Let $\Gamma_{\geq I} \df \bigcup_{J\supseteq I}\Gamma_J$. Then we can compute
$|\Gamma_I|$ via M\"obius inversion \cite{lovász2012large}
\begin{equation} \label{eq:mobius}
|\Gamma_I| =\sum_{J\supseteq I} (-1)^{|J\setminus I|}\cdot |\Gamma_{\geq J}|.
\end{equation}
One way to arrive at this equality is to note that every $\sigma\in\Gamma_I$
contributes 1 to both parts while every $\sigma\in \Gamma_J$ with $J\supset
I$ contributes zero. After plugging \eqref{eq:mobius} into \eqref{eq:per} and swapping the order of
summation, we see that
\[
\mathrm{per}(B) = \sum_{I\subseteq J\subseteq [k]} (-1)^{|J|}
\cdot |\Gamma_{\geq J}| = \sum_{J\subseteq [k]}(-2)^{|J|}\cdot |\Gamma_{\geq J}|.
\]
It only remains to note that $|\Gamma_{\geq J}| = \of{\prod_{i\in J}n_i}\cdot
(n-|J|)_{k-|J|}$ (values $\sigma(i)$ for $i\not\in J$ can be assigned
arbitrarily). Thus, splitting the sum according to $|J|$,
\[
\mathrm{per}(B) = \sum_{\ell=0}^k (-2)^\ell\cdot
(n-\ell)_{k-\ell}\cdot \sum_{J\in {[k]\choose\ell}}\prod_{i\in J}n_i =
 \sum_{\ell=0}^k \alpha_\ell(n)e_\ell(n_1,\ldots,n_k).
\]
\end{proof}

\section{Proof of Theorem \ref{thm:main}}
Let $k>0$. We fix a probability distribution $\mu=(\mu_1,\ldots,\mu_k)$ on
$[k]$, where $\mu_i$ are arbitrary positive distinct rationals summing to one.
For example, we can set
\begin{equation} \label{eq:good_mu_i}
\mu_i\df\frac i{{k+1\choose 2}}\ (1\leq i\leq k)
\end{equation}
albeit the exact choice of $\mu_i$ will become important only in Section
\ref{sec:Asymptotics}.

Let $d_k$ be the least common multiple of the denominators of
$\mu_1,\ldots,\mu_k$, let $n_0<d_k$ be the remainder of $n\bmod d_k$, and let
\begin{equation} \label{eq:mu_i}
n_i\df \mu_i(n-n_0).
\end{equation}
Then (provided $n\geq d_k$) $n_i$ are positive integers such that
$n_1+\ldots+n_k = n-n_0\leq n$. Hence we may consider the corresponding
matrix $B\df B_{n_1,\ldots,n_k}$, and we are going to show that
\[
|\set{\mathrm{per}(a\ast B)}{a\in \{\pm 1\}^n}| \geq \varepsilon_k n^{k-2},
\]
(where $\varepsilon_k$ will also be determined later), thus establishing
Theorem \ref{thm:main}.\\

Our first observation is that
\begin{equation} \label{eq:transformation}
|\set{\mathrm{per}(a\ast B)}{a\in \{\pm 1\}^n}| = |\set{\mathrm{per}(\varepsilon\ast B)}{\varepsilon\in \{0, 1\}^n}|.
\end{equation}
Indeed, for $a\in \{\pm 1\}^n$, let $\varepsilon_a= \frac 12(j_n-a);\
\varepsilon_a \in \{0,1\}^n$. Then, since the permanent function is linear in every individual row,
\[
\mathrm{per}(\varepsilon_a\ast B) = \frac 12(\mathrm{per}(j_n\ast B) - \mathrm{per}(a\ast B)).
\]
Hence the two sets featuring in \eqref{eq:transformation} are obtained from each other by an
invertible affine transformation and \eqref{eq:transformation} follows.

\medskip
Recall that $n_0=n-(n_1+\ldots+n_k)$. Assume without a loss of generality that the first $n_0$
columns in $B$ are all ones and let us restrict ourselves in
\eqref{eq:transformation} only to those $\epsilon$ for which
$\epsilon_1=\ldots=\epsilon_{n_0}=0$. Then the value
$\mathrm{per}(\varepsilon\ast B)$ can be computed by Laplace's
expansion in the first row. Namely, let $x_i\in [0,n_i]$ be the number of
those $j\in [n]$ for which $\varepsilon_j =1$ and $B_{ij}=-1$; thus,
$\varepsilon$ can be identified with the point $x\in [0,n_1]\times\ldots\in
[0,n_k]$. In addition, let
\[
p_i\df\mathrm{per}(B_{n_1,\ldots,n_{i-1},n_i-1,n_{i+1},\ldots,n_k}).
\]
Then the Laplace expansion amounts to the formula
\[
\mathrm{per}(\varepsilon\ast B) =\sum_{i=1}^k p_ix_i.
\]
In other words, the set of interest $\set{\mathrm{per}(\varepsilon\ast
B)}{\varepsilon\in \{0, 1\}^n}$ is simply the generalized $k$-dimensional
arithmetic progression
\[
\Gamma\df\set{\sum_{i=1}^k p_ix_i}{0\leq x_i\leq n_i}
\]
with basis $p_1,\ldots,p_k$. This set need not be proper, and the rest
of the proof amounts to showing that it contains a sufficiently large proper
sub-progression.

By Lemma \ref{lem:per_B},
\begin{eqnarray*}
p_i &=&\sum_{\ell=0}^k \alpha_\ell(n-1)e_\ell(n_1,\ldots, n_{i-1}, n_i-1, n_{i+1},\ldots, n_k)
\\ & = &
\sum_{\ell=0}^k \alpha_\ell(n-1)e_\ell(\mu_1 (n-n_0),\ldots, \mu_{i-1}(n-n_0), \mu_i(n-n_0)-1,
\mu_{i+1}(n-n_0),\ldots,
\mu_k(n-n_0));
\end{eqnarray*}
also note that $\alpha_\ell(n-1)$ is an integer polynomial in $n$ of degree
$k-\ell$. This observation allows us to view $p_i$ as a polynomial of
degree $\leq k$ in $\mathbb{Q}[n_0, n]$ (recall that the $\mu_i$ are fixed
throughout the argument).

The crucial part of the proof is the following result.
\begin{lemma}[main] \label{lem:main}
For any integer $n_0$, the resulting
system of polynomials $\bar p_1(n),\ldots, \bar p_k(n)\in \mathbb Q[n]$ has
co-rank $\leq 2$ over $\mathbb Q$. In other words, these polynomials span a
$\mathbb Q$-linear space of dimension $\geq k-2$.
\end{lemma}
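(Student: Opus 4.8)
The plan is to remove the dependence of $\bar p_i$ on the index $i$ by re‑expanding everything in terms of the fixed weights $\mu_i$ instead of the $n_i$, and then to analyze the small resulting family of polynomials through their orders of vanishing at $n=n_0$. First I would put $N\df n-n_0$, so that $n_j=\mu_j N$ and $e_\ell(n_1,\dots,n_k)=\sigma_\ell N^\ell$ with $\sigma_\ell\df e_\ell(\mu_1,\dots,\mu_k)$; in particular $\sigma_0=1$. Starting from the formula for $\bar p_i$ given by Lemma~\ref{lem:per_B} and applying the two symmetric‑function identities $e_\ell(\dots,n_i-1,\dots)=e_\ell(n_1,\dots,n_k)-e_{\ell-1}(n_1,\dots,\widehat{n_i},\dots,n_k)$ and $e_{\ell-1}(n_1,\dots,\widehat{n_i},\dots,n_k)=\sum_{r\ge 0}(-n_i)^r e_{\ell-1-r}(n_1,\dots,n_k)$, a routine rearrangement produces
\[
\bar p_i(n)=\sum_{r=0}^{k-1}\mu_i^r\, g_r(n),
\]
where each $g_r\in\mathbb Q[n]$ is independent of $i$; explicitly, for $r\ge 1$ one has $g_r=(-1)^{r+1}h_r$ with $h_r\df\sum_{m=r}^{k-1}\sigma_{m-r}\,\beta_m$ and $\beta_m\df\alpha_{m+1}(n-1)\,N^m$, while $g_0$ equals $-h_0$ plus the $i$‑free term $\sum_{\ell}\alpha_\ell(n-1)\sigma_\ell N^\ell$. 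Since $\mu_1,\dots,\mu_k$ are distinct, the $k\times k$ Vandermonde matrix $(\mu_i^r)_{i\in[k],\,0\le r<k}$ is invertible over $\mathbb Q$, so $\mathrm{span}_{\mathbb Q}\{\bar p_1,\dots,\bar p_k\}=\mathrm{span}_{\mathbb Q}\{g_0,\dots,g_{k-1}\}$ inside $\mathbb Q[n]$; it therefore suffices to show that this span has dimension at least $k-2$.

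For this I would simply discard $g_0$ and show that $g_1,\dots,g_{k-1}$ already span a space of dimension $\ge k-2$. Because $\sigma_0=1$, the transition matrix expressing $(h_1,\dots,h_{k-1})$ in terms of $(\beta_1,\dots,\beta_{k-1})$ is upper triangular with $1$'s on the diagonal, hence invertible over $\mathbb Q$, so $\mathrm{span}\{g_1,\dots,g_{k-1}\}=\mathrm{span}\{\beta_1,\dots,\beta_{k-1}\}$. Substituting $\alpha_{m+1}(n-1)=(-2)^{m+1}\prod_{t=m+2}^{k}(n-t)$ gives $\beta_m=(-2)^{m+1}(n-n_0)^m\prod_{t=m+2}^{k}(n-t)$, so, after dropping the nonzero scalars $(-2)^{m+1}$ and the common factor $n-n_0$, the claim becomes the following clean statement: the $k-1$ polynomials
\[
\tilde\gamma_m(n)\df(n-n_0)^{m-1}\prod_{t=m+2}^{k}(n-t)\qquad(1\le m\le k-1),
\]
each of degree $k-2$, span a $\mathbb Q$‑space of dimension $\ge k-2$; equivalently, they admit at most one linear relation.

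I would finish this by counting orders of vanishing at $n=n_0$, using that polynomials with pairwise distinct orders of vanishing at a point are linearly independent. If $n_0\notin\{3,\dots,k\}$, then no linear factor $n-t$ occurring in any $\tilde\gamma_m$ vanishes at $n_0$, so $\tilde\gamma_m$ vanishes there to order exactly $m-1$; the orders $0,1,\dots,k-2$ are pairwise distinct, so in this case the $\tilde\gamma_m$ are even linearly independent. If $n_0=c$ with $3\le c\le k$, then the factor $n-c$ is present in $\tilde\gamma_m$ exactly when $m\le c-2$, so $\tilde\gamma_m$ vanishes at $n_0$ to order $m$ for $m\le c-2$ and to order $m-1$ for $m\ge c-1$; the resulting multiset of orders is $\{1,2,\dots,k-2\}$ with the value $c-2$ occurring twice, so $k-2$ distinct orders occur. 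In either case $\dim\mathrm{span}\{\tilde\gamma_1,\dots,\tilde\gamma_{k-1}\}\ge k-2$, which completes the proof.

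The symmetric‑function bookkeeping of the first step and the sign‑ and index‑tracking of the second are routine but must be carried out carefully. The genuinely delicate point is the degenerate range $n_0\in\{3,\dots,k\}$ in the last step: there the orders of vanishing of the $\tilde\gamma_m$ collide, and one must check that the collision occurs only between the two consecutive polynomials $\tilde\gamma_{c-2}$ and $\tilde\gamma_{c-1}$, so that only one dimension is lost. Together with discarding $g_0$ --- which costs at most one further dimension --- this is exactly what accounts for the co‑rank bound $2$ in the statement.
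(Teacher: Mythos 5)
Your proof is correct, but it follows a genuinely different route from the paper's. The paper passes to the differences $p_1-p_i$, normalizes them to $\widehat p_i=(p_1-p_i)/(\mu_1-\mu_i)$, and writes the resulting $(k-1)\times(k-1)$ coefficient matrix as a sum of rank-one terms $\sum_{\ell=2}^k(-2)^\ell R_\ell S_\ell^\top$, where the vectors $R_\ell$ (elementary symmetric functions of the $\mu_j$ with one variable omitted) are shown to be linearly independent via the Jacobian of the elementary symmetric polynomials; this reduces the problem to bounding $\dim\mathrm{Span}(S_2,\dots,S_k)$, which is handled by a recursion on an auxiliary family $S_{\ell,a}$ with a case analysis around $\ell+a=n_0$. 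You instead keep the $\bar p_i$ themselves, expand each as $\sum_{r}\mu_i^r g_r(n)$ with $i$-independent polynomial coefficients, and use the invertibility of the Vandermonde matrix $(\mu_i^r)$ to replace the $\bar p_i$ by the $g_r$; after a unitriangular change of basis the problem collapses to the completely factored polynomials $(n-n_0)^{m-1}\prod_{t=m+2}^k(n-t)$, whose span you bound by counting orders of vanishing at $n=n_0$. Both arguments ultimately rest on a Vandermonde-type nondegeneracy of the distinct $\mu_i$, but your endgame is more elementary and more transparent: it isolates exactly where the two potentially lost dimensions arise (the discarded $g_0$, and the single collision of vanishing orders when $n_0\in\{3,\dots,k\}$), and I verified that the symmetric-function identities, the formula $\alpha_{m+1}(n-1)=(-2)^{m+1}\prod_{t=m+2}^{k}(n-t)$, and the collision analysis in the degenerate range all check out. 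The trade-off is that the paper's rank-one decomposition retains more information about the full coefficient matrix, which could in principle help sharpen the co-rank bound for particular choices of the $\mu_i$ or of $n_0$.
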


We will prove Lemma \ref{lem:main} in the next section. For now, we will show how
it implies Theorem \ref{thm:main}. Let $N_k>0$ be an integer
such that $N_k\cdot p_1,\ldots, N_k\cdot p_k$ are integer
polynomials (say, $N_k=d_k^k$ will do), and let $M_k$ be the sum of absolute
values of the coefficients of $N_k\cdot \bar p_1(n),\ldots, N_k\cdot \bar
p_k(n)$, maximized over all choices of $n_0=0,1,\ldots, d_k-1$.  Let
\begin{equation}\label{eq:delta}
\delta_k\df \min\of{(4M_k)^{-1},\mu_1,\ldots,\mu_k}.
\end{equation}

Fix $n_0\in \{0,1,\dots,d_k-1\}$ and pick any $I\in {[k]
\choose k-2}$ such that $\set{\bar p_i(n)}{i\in I}$ are linearly independent. Consider the sub-cube $Q\subseteq \prod_{i=1}^k [0..n_i]$ defined as
follows: $(x_1,\ldots,x_k)\in Q$ iff $\forall i\in I(x_i\in [0..\delta_kn])$ and $\forall i\not \in I(x_i=0)$. Clearly, $|Q|\geq (\delta_kn)^{k-2}$. Then the
following claim implies Theorem 1, with
\begin{equation} \label{eq:epsilon}
\epsilon_k \df \delta_k^{k-2}.
\end{equation}

\begin{claim} \label{clm:final}
The restriction of $\Gamma$ onto $Q$ is proper.
\end{claim}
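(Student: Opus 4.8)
The plan is to reduce Claim~\ref{clm:final} to the statement that a single nonzero polynomial fails to vanish at our particular value of $n$. Since $x_i=0$ for every $i\notin I$ on all of $Q$, the linear form $\sum_{i=1}^k p_ix_i$ coincides on $Q$ with $\sum_{i\in I}p_ix_i$; hence properness of the restriction of $\Gamma$ to $Q$ is exactly injectivity of the map $(x_i)_{i\in I}\mapsto\sum_{i\in I}p_i(n)x_i$ on the box $\prod_{i\in I}[0..\delta_kn]$. (One should first check that $Q$ really is contained in $\prod_{i=1}^k[0..n_i]$, which is where the condition $\delta_k\leq\mu_i$ enters, at least once $n$ is large; for the remaining finitely many $n$, Theorem~\ref{thm:main} holds trivially by shrinking $\epsilon_k$.)

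Suppose then that $x,x'\in Q$ have the same image, and set $y_i\df x_i-x_i'$ for $i\in I$, so $\sum_{i\in I}p_i(n)y_i=0$ and $|y_i|\leq\delta_kn$; the goal is $y=0$. Consider the polynomial $r\df\sum_{i\in I}y_i\bar p_i\in\mathbb Q[n]$, of degree at most $k$. By the choice of $I$ (possible by Lemma~\ref{lem:main}) the $\bar p_i$ with $i\in I$ are linearly independent over $\mathbb Q$, so if $y\neq 0$ then $r$ is a \emph{nonzero} polynomial; yet our hypothesis gives $r(n)=0$. It remains to rule this out, and this is exactly what the constants $N_k,M_k,\delta_k$ were set up for.

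Write $N_kr=\sum_{j=0}^kC_jn^j$; the $C_j$ are integers, since each $N_k\bar p_i$ has integer coefficients, and they are not all zero. Expanding $C_j=\sum_{i\in I}y_i\cdot(\text{coefficient of }n^j\text{ in }N_k\bar p_i)$ and summing absolute values over $j$,
\[
\sum_{j=0}^k|C_j|\ \leq\ \Bigl(\max_{i\in I}|y_i|\Bigr)\sum_{i\in I}\|N_k\bar p_i\|_1\ \leq\ \delta_kn\cdot M_k\ \leq\ \frac{n}{4},
\]
where $\|\cdot\|_1$ denotes the sum of absolute values of the coefficients, $M_k$ bounds $\sum_i\|N_k\bar p_i\|_1$ uniformly over the residues $n_0$, and $\delta_k\leq(4M_k)^{-1}$. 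Letting $j^*$ be the largest index with $C_{j^*}\neq0$, we have $|C_{j^*}|\geq1$, hence for $n\geq1$
\[
|N_kr(n)|\ \geq\ n^{j^*}-\sum_{j<j^*}|C_j|\,n^j\ \geq\ n^{j^*}-n^{j^*-1}\sum_{j<j^*}|C_j|\ \geq\ n^{j^*}-\frac{1}{4}n^{j^*}\ >\ 0
\]
(the case $j^*=0$ being immediate), contradicting $r(n)=0$. Therefore $y=0$ and the restriction of $\Gamma$ to $Q$ is proper.

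The step I expect to be the crux is passing from ``$r$ is a nonzero polynomial'' to ``$r(n)\neq0$''. Lemma~\ref{lem:main} only yields the former, and a nonzero polynomial of degree $\leq k$ can vanish at as many as $k$ integers, so it might in principle vanish at our $n$. The resolution is the a priori bound $|y_i|\leq\delta_kn$: it makes the integer coefficients of $N_kr$ small relative to $n$, so the leading term dominates and forces $r(n)\neq0$. Choosing $\delta_k$ in \eqref{eq:delta} small enough that $\sum_j|C_j|$ stays below $n$ (with room to spare) is precisely the purpose of that definition; the remaining ingredients — integrality of the $N_k\bar p_i$, the uniform bound $M_k$, and the inclusion $Q\subseteq\prod_i[0..n_i]$ — are routine.
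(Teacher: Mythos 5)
Your proposal is correct and follows essentially the same route as the paper: choose $I$ via Lemma \ref{lem:main}, observe that a nonzero difference in $Q$ yields a nonzero integer polynomial $\sum_i y_i N_k\bar p_i$, bound its coefficients by $\delta_k n M_k\leq n/4$ using the definition of $\delta_k$, and conclude that the leading term dominates so the polynomial cannot vanish at $n$. The "crux" you flag — that linear independence alone does not preclude vanishing at the specific integer $n$, and that the coefficient bound is what rescues this — is exactly the mechanism the paper's proof uses.
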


\begin{proof}
Let $x\neq y\in Q$; we have to prove that $\sum_{i=1}^k (y_i-x_i)p_i\neq 0$.
By our choice of $\vec\mu$ and $I$, the {\em integer} univariate
polynomial $\sum_{i=1}^k (y_i-x_i)N_k \bar p_i(n)$ is non-zero. Let $d$ be
its degree, thus
\[
\sum_{i=1}^k (y_i-x_i)N_k \bar p_i(n) = \beta_d n^d+\beta_{d-1}n^{d-1}+\ldots+
\beta_1n,
\]
where $\beta_1,\ldots,\beta_d\in \mathbb Z$ and $\beta_d\neq 0$. Moreover,
\[
|\beta_j|\leq (2\delta_k n) N_k \leq n/2,
\]
where the second inequality follows from the definition \eqref{eq:delta} of
$\delta_k$. Hence
\[
\absvalue{\sum_{i=1}^k (y_i-x_i)N_k \bar p_i(n)} \geq n^d \of{1-\frac 12-\frac 1{2n}-
\frac 1{2n^2}-\cdots}>0.
\]
\end{proof}

The proof of Theorem \ref{thm:main} is now complete, modulo lemma \ref{lem:main}.

\subsection{Proof of the main lemma}
Recall that the permanental minor $p_i$ (where $i\geq 1$) is given by
\[
\begin{split}
p_i&=\sum_{\ell=0}^k\alpha_\ell(n-1)e_\ell(n_1,\dots,n_{i-1},n_i-1,n_{i+1},\dots,n_k).
\end{split}
\]
We are actually going to prove that already the polynomials $\bar p_1(n)-\bar
p_i(n)\ (i>1)$, for any pairwise distinct $\mu_1,\ldots,\mu_k$ and any
integer $n_0$, span a subspace of dimension $\geq k-2$. To that end, we compute
\[
\begin{split} \label{eq:main_computation}
p_1-p_i&=\sum_{\ell=0}^k\alpha_\ell(n-1)\left[e_\ell(n_1-1,n_2,\dots,n_k)-e_\ell(n_1,\dots,n_{i-1},n_i-1,n_{i+1},\dots,n_k)\right]\\
&=\sum_{\ell=0}^k\alpha_\ell(n-1)\left[e_{\ell-1}(n_1,\dots,n_{i-1},n_{i+1},\dots,n_k)-e_{\ell-1}(n_2,\dots,n_k)\right]\\
&=(n_1-n_i)\sum_{\ell=2}^k\alpha_\ell(n-1)e_{\ell-2}(n_2,\dots,n_{i-1},n_{i+1},\dots,n_k)\\
&=(\mu_1-\mu_i)\sum_{\ell=2}^k\alpha_\ell(n-1)(n-n_0)^{\ell-1}e_{\ell-2}(\mu_2,\dots,\mu_{i-1},\mu_{i+1},\dots,\mu_k),
\end{split},
\]
where in the last line we used the definition \eqref{eq:mu_i} of $n_i$.

Let
\[
\widehat p_i(n) \df \frac{p_1-p_i}{\mu_1-\mu_i} \in\mathbb{Z}[n].
\]
Consider the $(k-1)\times (k-1)$ matrix $M$ with $M_{ij}\ (2\leq i\leq k,
0\leq j\leq k-2)$ being the coefficient of the $n^{k-j-1}$ in the $i$th
polynomial $\widehat p_i(n)$. We would like to show that this matrix has rank
$\geq k-2$ over $\mathbb{Q}$. Let us compute its entries.

First, note that $\frac{(n-\ell-1)!}{(n-k-1)!}(n-n_0)^{\ell-1}$ is a
polynomial of degree $k-1$ in $n$, so we can expand this quotient as an
alternating sum with coefficients given by elementary symmetric polynomials
in the values $\ell+1,\dots,k$ and $\ell-1$ copies of $n_0$:
\begin{eqnarray*}
\frac{(n-\ell-1)!}{(n-k-1)!}(n-n_0)^{\ell-1} &=&\underbrace{(n-n_0)(n-n_0)\ldots (n-n_0)}_{\ell-1\ \text{times}}
(n-\ell-1) \ldots (n-k)\\ &=&\sum_{j=0}^{k-1}(-1)^jn^{k-j-1}e_j(\underbrace{n_0,\ldots,n_0}_{\ell-1\ \text{times}}, \ell+1,\dots,k).
\end{eqnarray*}
Inserting this sum into the equation for $p_1-p_i$, we obtain
\[
\widehat p_i(n)=\sum_{\ell=2}^k\sum_{j=0}^{k-1}(-1)^j(-2)^\ell e_{\ell-2}(\mu_2,\dots,\mu_{i-1},\mu_{i+1},\dots,\mu_k)e_j(\underbrace{n_0,\ldots,n_0}_{\ell-1\ \text{times}},\ell+1,\dots,k)n^{k-j-1}.
\]
Switching the order of summation yields
\[
\widehat p_i(n) = \sum_{j=0}^{k-1}(-1)^jn^{k-j-1}\sum_{\ell=2}^k(-2)^\ell e_{\ell-2}(\mu_2,\dots,\mu_{i-1},\mu_{i+1},\dots,\mu_k)e_j(\underbrace{n_0,\ldots,n_0}_{\ell-1\ \text{times}},\ell+1,\dots,k).
\]
Thus
\[
M_{ij} = (-1)^j \sum_{\ell=2}^k(-2)^\ell e_{\ell-2}(\mu_2,\dots,\mu_{i-1},\mu_{i+1},\dots,\mu_k)e_j(\underbrace{n_0,\ldots,n_0}_{\ell-1\ \text{times}},\ell+1,\dots,k),
\]
where $2\leq i\leq k,\ 0\leq j\leq k-2$.\\

We now interpret this computation in a matrix form. For that, let $R_\ell\in\mathbb{Q}^{[2\dots k]}$ be given by
\[
(R_\ell)_i \df e_{\ell-2}(\mu_2,\dots,\mu_{i-1},\mu_{i+1},\dots,\mu_k),
\]
and let $S_\ell\in\mathbb{Q}^{[0..k-2]}$ be given by
\[
(S_\ell)_j \df (-1)^j\cdot e_j(\underbrace{n_0,\ldots,n_0}_{\ell-1\ \text{times}},\ell+1,\dots,k).
\]
Then
\[
M=\sum_{\ell=2}^k (-2)^\ell(R_\ell S_\ell^\top).
\]
The vectors $R_\ell$ are linearly independent over $\mathbb{Q}$ since the determinant of the matrix comprised of these vectors can be viewed as the Jacobian matrix corresponding to the transformation
\[
(\mu_2,\dots,\mu_k)\mapsto(e_{k-1}(\mu_2,\dots,\mu_k),\dots,e_1(\mu_2,\dots,\mu_k)),
\]
which is nonsingular at every point $(\mu_2,\dots,\mu_k)$ with pairwise distinct coordinates \cite{lascoux2002jacobians}. In fact, the determinant is proportional to
\[
\Delta\df\prod_{2\leq i<j\leq k}(\mu_i-\mu_j).
\]

Note that applying an invertible linear transformation on the left to the matrix
$M$ does not change its rank. Hence
\[
\mathrm{rk}(M)=\dim(\mathrm{Span}(S_\ell|2\leq\ell\leq k)).
\]
To compute the dimension of this space, we need the following generalization:
for $a\geq1,\ 2\leq \ell\leq k+1-a$, we let $S_{\ell,a}\in\mathbb{Q}^{[0\dots
k-a-1]}$ be given by
\[
(S_{\ell,a})_j\df(-1)^j\cdot e_j(\underbrace{n_0,\dots,n_0}_{\ell -1\text{ times}},\ell+a,\ell+a+1,\dots,k),
\]
so that $S_\ell=S_{\ell,1}$. Then we have the following recursion:
\begin{equation}
\begin{cases}
(S_{\ell+1,a} - S_{\ell,a})_0=0& (\text{since}\ (S_{\ell,a})_0=1\ \text{for all}\ a,\ell )\\
(S_{\ell+1,a} - S_{\ell,a})_j = (\ell+a-n_0) (S_{\ell, a+1})_{j-1},& j\geq 1.
\end{cases}
\end{equation}
Or, in the vector form,
\[
S_{\ell+1,a} - S_{\ell, a} = (\ell+a-n_0)(0\ast S_{\ell, a+1}).
\]

Now, since $S_{k+1-a,a}$ has 1 in the position $j=0$, we have the recursion
\begin{eqnarray*}
&& \dim(\text{Span}\vecset{S_\ell, a}{2\leq\ell\leq k+1-a})\\ &&\hspace{\parindent} =\dim(\text{Span}(S_{k+1-a,a},
\vecset{S_{\ell+1,a} - S_{\ell, a} }{2\leq\ell\leq k-a}))\\  && \hspace{\parindent} =
1+ \dim(\text{Span}\vecset{(\ell+a-n_0)\cdot S_{\ell, a+1} }{2\leq\ell\leq k-a}),
\end{eqnarray*}
and the only remaining problem is that the coefficient $\ell+a-n_0$ can be 0.
But this is actually not a problem since if $\ell+a=n_0$ then $S_{\ell,a+1} =
S_{\ell-1,a+1}$ and hence this vector still appears in the {\em set}
$\set{(\ell+a-n_0)\cdot S_{\ell, a+1} }{2\leq\ell\leq k-a}$ with coefficient
-1 and can be manually added to the right-hand side without changing its
dimension {\bf unless $\ell=2$}. By applying reverse induction on $a$, we prove that
\[
\dim(\text{Span}\vecset{S_{\ell, a}}{2\leq\ell\leq k+1-a}) \geq
\begin{cases}k-1-a& \text{if}\ a\leq n_0-2\\ k-a& \text{if}\ a\geq n_0-1\end{cases}
\]
for $a=1$, we obtain the required bound.\\

\section{Asymptotics} \label{sec:Asymptotics}
We first prove an estimate on  $\delta_k$.

Recall that
\[
\delta_k=\min\left((4M_k)^{-1},\mu_1,\dots,\mu_k\right),
\]
where $M_k$ is the sum of absolute
values of the coefficients of $d_k^k\cdot \bar p_1(n),\ldots, d_k^k\cdot \bar
p_k(n)$, maximized over all choices of $n_0=0,1,\ldots, d_k-1$. Tasked with estimating $\delta_k$, we will establish an upper bound on $M_k$.
\[
\begin{split}
M_k&\leq d_k^k\sum_{\ell=2}^k\left|(-1)^j(-2)^\ell e_{\ell-2}(\mu_2,\dots,\mu_{i-1},\mu_{i+1},\dots\mu_k)e_j(n_0,\dots,n_0,\ell+1,\dots,k)\right|\\&=d_k^k\sum_{\ell=2}^k2^\ell e_{\ell-2}(\mu_2,\dots,\mu_{i-1},\mu_{i+1},\dots\mu_k)e_j(n_0,\dots,n_0,\ell+1,\dots,k)\\
&\leq d_k^k\sum_{\ell=2}^k2^\ell d_k^{k-(\ell-2)}(d_k-1)\cdots(d_k-(\ell-2))\cdot\binom{k-2}{\ell-2}e_j(n_0,\dots,n_0,\ell+1,\dots,k)\\
&\leq d_k^k\sum_{\ell=2}^k2^\ell d_k^k\binom{k-2}{\ell-2}e_j(n_0,\dots,n_0,\ell+1,\dots,k)\\
&\leq d_k^k\sum_{\ell=2}^k2^\ell d_k^{k+j}\binom{k-2}{\ell-2}\binom{k-1}{j}\\
&\leq2^kd_k^{2k}\binom{k-1}{j}\sum_{\ell=2}^k\binom{k-2}{\ell-2}\\
&\leq(4d_k^2k)^k
\end{split}
\]

We now insist that $\mu_i$s are given by \eqref{eq:good_mu_i} so that
$d_k=\binom{k+1}{2}=\frac{k(k+1)}{2}\leq2k^2$. We conclude that
\[
M_k\leq(16k^5)^k.
\]
Consequently,
\[
\delta_k\geq\min\left(\frac{1}{4(16k^5)^k}\mu_1,\mu_2,\dots,\mu_k\right)=\frac{1}{4(16k^5)^k}.
\]

Thus $\delta_k\geq k^{-O(k)}$ and (cf. \eqref{eq:epsilon}) $\epsilon_k\geq k^{-O(k^2)}$.
Therefore the quantitative version of Theorem \ref{thm:main} can be stated as
$$
r_{k+1,n} \geq k^{-O(k^2)}\cdot n^{k-2}.
$$
Plugging in $k\df\lfloor \epsilon\frac{\log n}{\log\log n}\rfloor$ for a sufficiently
small $\epsilon>0$ makes the first term $\geq n^{-k/2}$ and proves Theorem \ref{thm:main2}.

\section{Open Questions}
This work presents a superpolynomial lower bound on the cardinality of the range of the permanent of $\pm1$ matrices. However, numerical evidence suggests that the true cardinality is much larger. It would thus be interesting to establish an exponential lower bound at the very least.

\begin{problem}
Show that $r_n$ grows exponentially in $n$.
\end{problem}

One can also consider the question of providing precise asymptotics on the range of the cardinality of the {\bf determinant} of $\pm1$ matrices. In this setting, there is an injective mapping between the set of determinants of \textit{normalized} (i.e. scaled by a factor of $2^{-(n-1)}$) $n\times n$ $\pm1$ matrices and the set of determinants of $(n-1)\times(n-1)$ 0-1 matrices under which the determinant is scaled by a factor of $2^{-(n-1)}$ and possibly a minus sign \cite{MO39826}. A lower bound of $\Omega(2^n/n)$ has been established \cite{SHAH2022229} in the 0-1 setting. Improving the lower bound for $\pm1$ matrices would imply a lower bound for 0-1 matrices, thus we have the following problem:

\begin{problem}
Establish a lower bound of $\Omega(2^n)$ on the cardinality of the range of the determinant of (normalized) $\pm1$ $n\times n$ matrices.
\end{problem}

One of the goals of this work was to improve our understanding of the structure of the range of the permanent. There are other seemingly innocuous structural questions that evaded all approaches thus far. The permanent of a $\pm1$ matrix is always divisible by a certain power of two. It is open whether this value is always attained:

\begin{problem}[Krauter]
Show that the minimum positive value attained by the permanent of a $\pm1$ matrix is equal to $2^{n-\lfloor\log_2(n)\rfloor-1}$ if $n=2^k-1$ for some $k\in\mathbb{N}$, and it is equal to $2^{n-\lfloor\log_2(n)\rfloor}$ otherwise.
\end{problem}

\noindent This problem is listed as conjecture 34 in \cite{Minc1982to1987}.\\

Numerical experiments also suggest that the family of matrices $B_{\vec{n}}$ considered in this work fails to produce every value attained by the permanent on $\pm1$ matrices in general. It would be interesting to find a family of matrices which can reproduce every value while being ``simpler''. A candidate family is the set of ``upper triangular" matrices (in the sense that any $-1$ entry appears on or above the diagonal).

\begin{problem}
Determine whether the range of the permanent on the subset of ``upper triangular" $\pm1$ $n\times n$ matrices coincides with the range of the permanent on $\Omega_{n,n}$.
\end{problem}

\subsection*{Acknowledgements}
We would like to thank Ilya Shkredov for answering our questions and providing useful
references.

\printbibliography

@book{lovász2012large,
  title={Large Networks and Graph Limits},
  author={Lov{\'a}sz, L{\'a}szlo},
  series={American Mathematical Society colloquium publications},
  year={2012},
  publisher={American Mathematical Society}
}

@article{seifter1984upper,
  title={Upper bounds for permanents of (1,- 1)-matrices},
  author={Seifter, Norbert},
  journal={Israel Journal of Mathematics},
  volume={48},
  pages={69--78},
  year={1984},
  publisher={Springer}
}

@book{minc1984permanents,
  title={Permanents},
  author={Minc, Henryk},
  volume={6},
  year={1984},
  publisher={Cambridge University Press}
}

@article{krauter1983permanenten,
  title={Permanenten - ein Kurzer Uberblick},
  author={Krauter, Arnold R},
  journal={S{\'e}minaire Lotharingien de Combinatoire},
  volume={9},
  pages={34},
  year={1983}
}

@article{GUTERMAN2018256,
title = {On the values of the permanent of (0,1)-matrices},
journal = {Linear Algebra and its Applications},
volume = {552},
pages = {256-276},
year = {2018},
author = {Guterman, Alexander E and Taranin, Konstantin A}
}

@article{perfect1973positive,
  title={Positive diagonals of $\pm$1-matrices},
  author={Perfect, Hazel},
  journal={Monatshefte f{\"u}r Mathematik},
  volume={77},
  pages={225--240},
  year={1973},
  publisher={Springer}
}

@article{wanless2005permanents,
  title={Permanents of matrices of signed ones},
  author={Wanless, Ian M},
  journal={Linear and Multilinear Algebra},
  volume={53},
  number={6},
  pages={427--433},
  year={2005},
  publisher={Taylor \& Francis}
}

@book{tao2006additive,
  title={Additive combinatorics},
  author={Tao, Terence and Vu, Van H},
  volume={105},
  year={2006},
  publisher={Cambridge University Press}
}

@article{nguyen2011optimalinverselittlewoodoffordtheorems,
      title={Optimal Inverse Littlewood-Offord theorems},
      author={Nguyen, Hoi and Vu, Van H},
      journal = {Advances in Mathematics},
      volume = {226},
      number = {6},
      pages = {5298-5319},
      year = {2011}
}

@inproceedings{aaronson2010computationalcomplexitylinearoptics,
      title={The Computational Complexity of Linear Optics},
      author={Aaronson, Scott and Arkhipov, Alex},
      booktitle={Proceedings of the forty-third annual ACM symposium on Theory of computing},
      pages={333--342},
      year={2011}
}

@article{budrevich2018krauterconjecturepermanentstrue,
      title={Kr\"auter conjecture on permanents is true},
      author={Budrevich, Mikhail V  and Guterman, Alexander E},
      journal = {Journal of Combinatorial Theory, Series A},
      volume = {162},
      pages = {306-343},
      year = {2019}
}

@article{budrevich2015divisibility,
  title={On divisibility for the permanents of ($\pm 1$)-matrices},
  author={Budrevich, Mikhail V and Guterman, Alexander E and Taranin, Konstantin A},
  journal={Zapiski Nauchnykh Seminarov POMI},
  volume={439},
  pages={26--37},
  year={2015},
  publisher={St. Petersburg Department of Steklov Institute of Mathematics, Russian~…}
}

@inproceedings{krauter1993permanents,
  title={Permanents of (1,-1)-matrices},
  author={Kr{\"a}uter, Arnold R},
  booktitle={Proceedings of the Topology and Geometry Research Center},
  volume={4},
  number={1},
  pages={151--204},
  year={1993},
}

@article{wang1974permanents,
  title={On permanents of (1,- 1)-matrices},
  author={Wang, Edward T},
  journal={Israel Journal of Mathematics},
  volume={18},
  pages={353--361},
  year={1974},
  publisher={Springer}
}

@book{brualdirysercmt,
  title={Combinatorial matrix theory},
  author={Brualdi, Richard A and Ryser, Herbert J},
  volume={39},
  year={1991},
  publisher={Springer}
}

@article{marcus1962inequalities,
  title={Inequalities for the permanent function},
  author={Marcus, Marvin and Newman, Morris},
  journal={Annals of Mathematics},
  volume={75},
  number={1},
  pages={47--62},
  year={1962},
  publisher={JSTOR}
}

@article{reich1971another,
  title={Another solution of an old problem of P{\'o}lya},
  author={Reich, Simeon},
  journal={The American Mathematical Monthly},
  volume={78},
  number={6},
  pages={649--650},
  year={1971},
  publisher={Taylor \& Francis}
}

@article{budrevich2018divisibility,
  title={On the Kr{\"a}uter-Seifter Theorem on Permanent Divisibility.},
  author={Budrevich, Michael V and Guterman, Alexander E and Taranin, Konstantin A},
  journal={Journal of Mathematical Sciences},
  volume={232},
  number={6},
  year={2018}
}

@article{tao2008permanentrandombernoullimatrices,
    title = {On the permanent of random Bernoulli matrices},
    journal = {Advances in Mathematics},
    volume = {220},
    number = {3},
    pages = {657-669},
    year = {2009},
    author = {Terence Tao and Van Vu},
}

@article{kwan2022permanent,
      title={On the permanent of a random symmetric matrix},
      author={Kwan, Matthew and Sauermann, Lisa},
      journal={Selecta Mathematica},
      volume={28},
      number={1},
      pages={15},
      year={2022},
      publisher={Springer}
}

@misc{vu2020recentprogresscombinatorialrandom,
      title={Recent progress in combinatorial random matrix theory},
      author={Vu, Van H},
      year={2020},
      eprint={2005.02797},
      archivePrefix={arXiv},
      primaryClass={math.CO},
      url={https://arxiv.org/abs/2005.02797},
}

@article{lascoux2002jacobians,
  title={Jacobians of symmetric polynomials},
  author={Lascoux, Alain and Pragacz, Piotr},
  journal={Annals of Combinatorics},
  volume={6},
  number={2},
  pages={169--172},
  year={2002},
  publisher={Birkh  user Verlag, Basel}
}

@article{SHAH2022229,
title = {Determinants of binary matrices achieve every integral value up to $\Omega(2^n/n)$},
journal = {Linear Algebra and its Applications},
volume = {645},
pages = {229-236},
year = {2022},
author = {Rikhav Shah},
}

@article{Minc1982to1987,
author = {Henryk Minc},
title = {Theory of permanents 1982–1985},
journal = {Linear and Multilinear Algebra},
volume = {21},
number = {2},
pages = {109--148},
year = {1987},
publisher = {Taylor \& Francis},
doi = {10.1080/03081088708817786},
URL = { 
        https://doi.org/10.1080/03081088708817786
},
eprint = { 
        https://doi.org/10.1080/03081088708817786
}
}

@MISC {MO39826,
    TITLE = {Maximum determinant of $\{0,1\}$-valued $n\times n$-matrices},
    AUTHOR = {Will Orrick (https://mathoverflow.net/users/484/will-orrick)},
    HOWPUBLISHED = {MathOverflow},
    NOTE = {URL:https://mathoverflow.net/q/39826 (version: 2019-01-16)},
    EPRINT = {https://mathoverflow.net/q/39826},
    URL = {https://mathoverflow.net/q/39826}
}
\end{document}